\newtheorem{theorem}{Theorem}
\newtheorem{corollary}[theorem]{Corollary}
\newtheorem{lemma}[theorem]{Lemma}
\newtheorem{proposition}[theorem]{Proposition}
\numberwithin{equation}{section}
\numberwithin{theorem}{section}
\begin{document}
\title{An Oriented Hypergraphic Approach to Algebraic Graph Theory}
\author{Nathan Reff}
\address{Department of Mathematical Sciences\\
Binghamton University (SUNY)\\
Binghamton, NY 13902, U.S.A.}
\email{reff@math.binghamton.edu}
\author{Lucas J. Rusnak}
\address{Department of Mathematics\\
Texas State University\\
San Marcos, TX 78666, USA}
\email{Lucas.Rusnak@txstate.edu}
\subjclass[2010]{Primary 05C50; Secondary 05C65, 05C22}
\keywords{Oriented hypergraph, hypergraph Laplacian matrix, hypergraph
adjacency matrix, incidence matrix, signed graph}

\begin{abstract}
An oriented hypergraph is a hypergraph where each vertex-edge incidence is given a label of $+1$ or $-1$.  We define the adjacency, incidence and Laplacian matrices of an oriented hypergraph and study each of them.  We extend several matrix results known for graphs and signed graphs to oriented hypergraphs.  New matrix results that are not direct generalizations are also presented.  Finally, we study a new family of matrices that contains walk information.
\end{abstract}

\date{\today}
\maketitle

\section{Introduction}
Researchers have studied the adjacency, Laplacian, normalized Laplacian and signless Laplacian matrices of a graph \cite{MR2571608,AGT}. Recently there has been a growing study of matrices associated to a signed graph \cite{MITTOSSG}.  In this paper we hope to set the foundation for a study of matrices associated with an oriented hypergraph.

An \emph{oriented hypergraph} is a hypergraph with the additional structure that each vertex-edge incidence is given a label of $+1$ or $-1$ \cite{OHD,Shi1}.  A consequence of studying oriented hypergraphs is that graphs and signed graphs can be viewed as specializations.  A graph can be thought of as an oriented hypergraph where each edge is contained in two incidences, and exactly one incidence of each edge is signed $+1$.  A \emph{signed graph} can be thought of as an oriented hypergraph where each edge is contained in two incidences.

We define the adjacency, incidence, and Laplacian matrices of an oriented hypergraph and examine walk counting to extend classical matrix relationships of graphs and signed graphs to the setting of oriented hypergraphs.  New relationships involving the dual hypergraphic structure are also presented, including a result on line graphs of signed graphs.   Finally, we study a new matrix which encapsulates walk information on an oriented hypergraph to provide a unified combinatorial interpretation of the Laplacian matrix entries.

\section{Background}\label{BackgroundSection}
\subsection{Oriented Hypergraphs}

Throughout, $V$ and $E$ will denote disjoint finite sets whose respective
elements are called \emph{vertices} and \emph{edges}. \ An \emph{incidence
function} is a function $\iota :V\times E\rightarrow \mathbb{Z}_{\geq 0}$.  A vertex $v$ and an edge $e$ are said to be \emph{incident} (with
respect to $\iota $) if $\iota (v,e)\neq 0$. \ An \emph{incidence} is a
triple $(v,e,k)$, where $v$ and $e$ are incident and $k\in \{1$, $2$, $3$,\ldots, $\iota (v,e)\}$; the value $\iota (v,e)$ is called the \emph{%
multiplicity} of the incidence.

Let $\mathcal{I}$ be the set of incidences determined by $\iota $. \ An 
\emph{incidence orientation} is a function $\sigma :\mathcal{I}\rightarrow
\{+1,-1\}$.  An \emph{oriented incidence} is a quadruple $(v,e,k,\sigma (v,e,k))$.  An \emph{oriented hypergraph} is a quadruple $(V,E,\mathcal{I},\sigma )$, and
its \emph{underlying hypergraph} is the triple $(V,E,\mathcal{I})$.

A hypergraph is \emph{simple} if $\iota (v,e)\leq 1$ for all $v$ and $e$,
and for convenience we will write $(v,e)$ instead of $(v,e,1)$ if $G$ is a
simple hypergraph. Two, not necessarily distinct, vertices $v$ and $w$ are
said to be \emph{adjacent with respect to edge }$e$ if there exist
incidences $(v,e,k_{1})$ and $(w,e,k_{2})$ such that $(v,e,k_{1})\neq
(w,e,k_{2})$.  An \emph{adjacency} is a quintuple $(v,k_1;w,k_2;e)$, where $v$ and $w$ are adjacent with respect to $e$ using incidences $(v,e,k_1)$ and $(w,e,k_2)$. The \emph{degree} of a vertex $v$, denoted by $\deg (v)$, is equal to the number of incidences containing $v$.  A \emph{$k$-regular hypergraph} is a hypergraph where every vertex has degree $k$.   The \emph{size} of an edge is the number of incidences containing that edge.  A $k$\emph{-edge} is an edge of size $k$.  A \emph{$k$-uniform hypergraph} is a hypergraph where all of its edges have size $k$.

A \emph{walk} is a sequence $W=a_{0},i_{1},a_{1},i_{2},a_{2},i_{3},a_{3},...,a_{n-1},i_{n},a_{n}$ of vertices, edges and incidences, where $\{a_{k}\}$ is an alternating sequence of vertices and edges, $i_{h}$ is an incidence containing $a_{h-1}$ and $a_{h}$, and $i_{2h-1}\neq i_{2h}$. The first and last elements of this sequence are called \emph{anchors}.  A walk where both anchors are vertices is called a \emph{vertex-walk}.  A walk where both
anchors are edges is called an \emph{edge-walk}.  A walk where one anchor
is a vertex and the other is an edge is called a \emph{cross-walk}.  The \emph{length of a walk} is half the number of incidences that appear in that walk. Let $a_{i},a_{j}\in V\cup E$ and $k\in \frac{1}{2}\mathbb{Z}$, define $w(a_{i},a_{j};k)$ to be the number of walks of length $k$ from $a_{i}$ to $a_{j}$. Observe that vertex-walks and edge-walks have integral length, while cross-walks have half-integral length.

Given a hypergraph $G$ the \emph{incidence dual} $G^{\ast }$ is the
hypergraph obtained by reversing the roles of the vertices and edges. Similarly, for an oriented hypergraph $G=(V,E,\mathcal{I},\sigma )$, the \emph{incidence dual} $G^{\ast }$ is the oriented hypergraph $(E,V,\mathcal{I}^{\ast },\sigma ^{\ast })$, where $\mathcal{I}^{\ast }=\{(e,v,k):(v,e,k)\in \mathcal{I}\}$, and $\sigma ^{\ast }:\mathcal{I}^{\ast }\rightarrow \{+1,-1\}$ such that $\sigma ^{\ast }(e,v,k)=\sigma(v,e,k)$.  The set $\mathcal{I}^{\ast }$ determines an incidence
function $\iota ^{\ast }$ where $\iota ^{\ast }(e,v)=\iota (v,e)$.

\begin{proposition}
\label{P2}If $G$ is an oriented hypergraph, then $G^{\ast \ast }=G$.
\end{proposition}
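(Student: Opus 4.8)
The plan is to verify the equality $G^{**} = G$ coordinatewise by unwinding the definition of the incidence dual twice. Writing $G = (V, E, \mathcal{I}, \sigma)$, the first application gives $G^{*} = (E, V, \mathcal{I}^{*}, \sigma^{*})$, so that in $G^{*}$ the role of the vertex set is played by $E$ and the role of the edge set is played by $V$. A second application of the same definition, now treating $E$ as the vertices and $V$ as the edges of $G^{*}$, should return a quadruple whose first two coordinates are $V$ and $E$ respectively; the substance of the proof is then to check that the remaining two coordinates, the incidence set and the orientation, recover $\mathcal{I}$ and $\sigma$ exactly.

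First I would confirm the vertex and edge sets. Since dualization swaps these two sets, applying the operation twice swaps them back: the vertex set of $G^{**}$ is the edge set of $G^{*}$, namely $V$, and the edge set of $G^{**}$ is the vertex set of $G^{*}$, namely $E$. Next I would check the incidence set: by definition $(\mathcal{I}^{*})^{*} = \{(v, e, k) : (e, v, k) \in \mathcal{I}^{*}\}$, and $(e, v, k) \in \mathcal{I}^{*}$ holds precisely when $(v, e, k) \in \mathcal{I}$, so substituting yields $(\mathcal{I}^{*})^{*} = \mathcal{I}$. Finally, for the orientation I would evaluate $(\sigma^{*})^{*}$ on an arbitrary incidence $(v, e, k) \in \mathcal{I}$; the defining relation gives $(\sigma^{*})^{*}(v, e, k) = \sigma^{*}(e, v, k) = \sigma(v, e, k)$, so the two orientation functions agree on all of $\mathcal{I}$.

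The argument is a routine double application of a definition, so I do not anticipate a genuine obstacle. The only point demanding care is the bookkeeping: each dualization transposes the first two entries of every incidence triple while leaving the multiplicity index $k$ and the sign value fixed, and one must not conflate the abstract vertex/edge roles inside $G^{*}$ with those of $G$ itself. Once the coordinatewise matching $V \leftrightarrow V$, $E \leftrightarrow E$, $(\mathcal{I}^{*})^{*} = \mathcal{I}$, and $(\sigma^{*})^{*} = \sigma$ is established, all four components of the quadruple coincide and the equality $G^{**} = G$ follows.
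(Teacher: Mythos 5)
Your proof is correct: the paper states this proposition without proof, treating it as immediate from the definition, and your argument is exactly the routine definitional unwinding that justifies it. The coordinatewise check --- vertex and edge sets swap back, $(\mathcal{I}^{*})^{*}=\mathcal{I}$, and $(\sigma^{*})^{*}(v,e,k)=\sigma^{*}(e,v,k)=\sigma(v,e,k)$ --- is complete and matches what the paper leaves implicit.
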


The concept of signed adjacency was introduced in \cite{OHD} as a
hypergraphic alternative to signed edges.  The \emph{sign of the adjacency $(v,k_1;w,k_2;e)$} is defined as 
\begin{equation*}
sgn_{e}(v,k_{1};w,k_{2})=-\sigma (v,e,k_{1})\sigma (w,e,k_{2})\text{.}
\end{equation*}%
This will oftentimes be shortened to $sgn_{e}(v,w)=-\sigma (v,e)\sigma
(w,e)$ when the adjacency is understood, as in the case of simple oriented
hypergraphs.  We will assume $sgn_{e}(v,k_{1};w,k_{2})=0$ if $v$ and $w$ are not adjacent.

The dual to signed adjacency is the \emph{sign of the co-adjacency} 
\begin{equation*}
sgn_{v}(e,k_{1};f,k_{2})=-\sigma (v,e,k_{1})\sigma (v,f,k_{2}).
\end{equation*}

The \emph{sign of a walk} is the product of the signs of all adjacencies in
the walk if it is a vertex-walk, the product of the signs of all
co-adjacencies if it is an edge-walk, and is the product of the signs of all
adjacencies in the walk along with the extra incidence sign if it is a
cross-walk. \ That is, the sign of a walk $%
W=a_{0},i_{1},a_{1},i_{2},a_{2},i_{3},a_{3},...,a_{n-1},i_{n},a_{n}$ is%
\begin{equation}\label{SignOfAWalk}
sgn(W)=(-1)^{p}\prod_{h=1}^{n}\sigma (i_{h})\text{,}
\end{equation}
where $ p=\lfloor n/2\rfloor$.

There are a number of additional signed walk counts we need besides $%
w(a_{i},a_{j};k)$. Let $w^{+}(a_{i},a_{j};k)$ be the number of
positive walks of length $k$ between anchors $a_{i}$ and $a_{j}$, and $%
w^{-}(a_{i},a_{j};k)$ be the number of negative walks of length $k$ between
anchors $a_{i}$ and $a_{j}$. Finally, let $w^{\pm
}(a_{i},a_{j};k)=w^{+}(a_{i},a_{j};k)-w^{-}(a_{i},a_{j};k)$.

A \emph{vertex-switching function} is any function $\theta:V\rightarrow \{-1,+1\}$.  Vertex-switching the oriented hypergraph $G$ means replacing $\sigma$ by $\sigma^{\theta}$, defined by: $\sigma^{\theta}(v,e,k_1)=\theta(v)\sigma(v,e,k_1)$; producing the oriented hypergraph $G^{\theta}=(V,E,\mathcal{I},\sigma^{\theta})$.  The vetex-switching produces an adjacency signature $sgn^{\theta}$, defined by: $sgn_{e}^{\theta}(v,k_{1};w,k_{2})= \theta(v)sgn_{e}(v,k_{1};w,k_{2})\theta(w)$. 

\subsection{Signed Graphs}

A 2-uniform simple oriented hypergraph has been called a bidirected graph,
and was first studied by Edmonds and Johnson \cite{MR0267898}. \ Since each 2-edge forms
a unique adjacency we can regard the adjacency sign as the sign of the
2-edge, and a bidirection as an orientation of the signed edges. \ Zaslavsky
introduced this concept to signed graphs in \cite{OSG}.

A \emph{signed graph} is a graph with the additional structure that each
edge is given a sign of either $+1$ or $-1$. Formally, a signed graph is a
pair $\Sigma =(\Gamma ,sgn)$ consisting of an \emph{underlying graph} $%
\Gamma =(V,E)$ and a \emph{signature} $sgn:E\rightarrow \{+1,-1\}$. An
unsigned graph can be thought of as a signed graph with all edges signed $+1$%
.  The \emph{sign of an edge $e$} is denoted $sgn(e)$.

An \emph{oriented signed graph} is a pair $%
(\Sigma ,\tau )$, consisting of a signed graph $\Sigma $ and an \emph{%
orientation} $\tau :V\times E\rightarrow \{-1,0,+1\}$, where%
\begin{align*}
\tau (v,e)\tau (w,e)& =-sgn(e)\text{ if }v\text{ and }w\text{ are adjacent
via }e, \\
\tau (v,e)& =0\text{ if }v\text{ and }e\text{ are not incident};
\end{align*}%
furthermore $\tau(v,e)\neq 0$ if $v$ is incident to $e$.  By convention, $\tau (v,e)=+1$ is thought as an arrow pointing toward the
vertex $v$, and $\tau (v,e)=-1$ is thought as an arrow pointing away from
the vertex $v$. Thus, an orientation can be viewed as a bidirection on a signed graph.

\begin{proposition}
\label{SGisaOHG}$G$ is a 2-uniform oriented hypergraph if, and only if, $G$
is an oriented signed graph.
\end{proposition}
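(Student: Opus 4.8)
The plan is to prove the biconditional by exhibiting explicit passages in each direction and then observing that the defining relations of the two structures coincide. Everything rests on the single fact recalled just before the statement: in a 2-uniform oriented hypergraph each edge is contained in exactly two incidences, hence each edge determines a \emph{unique} adjacency, so its adjacency sign may legitimately be taken as ``the sign of the edge.''

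First I would treat the forward direction. Suppose $G=(V,E,\mathcal{I},\sigma)$ is 2-uniform, so every $e\in E$ lies in exactly two incidences, say $(v,e,k_{1})$ and $(w,e,k_{2})$ with $(v,e,k_{1})\neq(w,e,k_{2})$. I would let the underlying graph $\Gamma=(V,E)$ join $v$ and $w$ by the edge $e$, set the signature $sgn(e):=sgn_{e}(v,k_{1};w,k_{2})=-\sigma(v,e,k_{1})\sigma(w,e,k_{2})$, and define the orientation by $\tau(v,e):=\sigma(v,e,k_{1})$, $\tau(w,e):=\sigma(w,e,k_{2})$, and $\tau(u,e):=0$ for every $u$ not incident to $e$. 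The orientation axioms are then immediate: $\tau(v,e)$ and $\tau(w,e)$ are nonzero because $\sigma$ takes values in $\{+1,-1\}$, and $\tau(v,e)\tau(w,e)=\sigma(v,e,k_{1})\sigma(w,e,k_{2})=-sgn(e)$ by construction.

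For the reverse direction, given an oriented signed graph $(\Sigma,\tau)$ with underlying graph $\Gamma=(V,E)$, I would build the oriented hypergraph on the same vertex and edge sets, whose incidences are, for each edge $e$ joining $v$ and $w$, the two endpoint-incidences $(v,e,1)$ and $(w,e,1)$, equipped with incidence orientation $\sigma(v,e,1):=\tau(v,e)$ and $\sigma(w,e,1):=\tau(w,e)$. The hypothesis that $\tau(v,e)\neq 0$ whenever $v$ is incident to $e$ is exactly what forces $\sigma$ into $\{+1,-1\}$, and the result is 2-uniform by construction. One then checks that the induced adjacency sign recovers the edge sign, $sgn_{e}(v,w)=-\sigma(v,e)\sigma(w,e)=-\tau(v,e)\tau(w,e)=sgn(e)$, so the two constructions are mutually inverse.

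The step I expect to be the genuine obstacle is confirming that the identifications $\sigma\leftrightarrow\tau$ and $sgn_{e}\leftrightarrow sgn$ really are inverse, i.e. that the relations $sgn(e)=-\tau(v,e)\tau(w,e)$ and $sgn_{e}(v,w)=-\sigma(v,e)\sigma(w,e)$ are literally the same equation once $\sigma$ and $\tau$ are identified on incidences; this is what makes the signature data and the incidence-orientation data carry identical information. The one case demanding care is a \emph{loop}, where both incidences of $e$ sit at a single vertex $v$ (so $\iota(v,e)=2$): here $\tau$, as a function on $V\times E$, cannot separately record the two half-edge orientations $\sigma(v,e,1)$ and $\sigma(v,e,2)$. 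I would therefore either restrict to the loopless setting matching the bidirected-graph convention recalled above, or reinterpret $\tau$ as acting on incidences rather than on vertex-edge pairs, so that the bijection survives; with that caveat settled, combining the two passages closes the equivalence.
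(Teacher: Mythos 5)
Your proof is correct and matches the paper's intent: the paper states this proposition without any proof, treating it as the definitional dictionary $\sigma \leftrightarrow \tau$ and $sgn_{e} \leftrightarrow sgn$ that you make explicit in both directions. Your caveat about loops is also well founded---since $\tau$ is a function on $V\times E$ it cannot record two distinct incidence signs at a single vertex (and the axiom with $v=w$ gives $\tau(v,e)^{2}=-sgn(e)$, forcing $sgn(e)=-1$), which is consistent with the fact that the paper only ever invokes this proposition for \emph{simple} $2$-uniform oriented hypergraphs, as in the proof of Theorem \ref{AdjLineGraphandDual}.
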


The line graph of a signed graph can be defined via oriented signed graphs. The
following definition was introduced by Zaslavsky \cite{MITTOSSG}.  Let $\Lambda (\Gamma )$ denote the line graph of the unsigned graph $\Gamma$. A \emph{line graph of an oriented signed graph $(\Sigma ,\tau )$} is the oriented signed graph $(\Lambda (\Gamma ),\tau _{\Lambda })$, where $\tau _{\Lambda }$ is defined by 
\begin{equation}\label{SGLGorientRel}
\tau _{\Lambda }(e_{ij},e_{ij}e_{jk})=\tau (v_{j},e_{ij}).
\end{equation}%
The signature of this line graph is denoted by $sgn_{\Lambda(\tau)}$.

\section{Adjacency matrix}\label{AdjacencySection}

The \emph{adjacency matrix} $A_{G}=[a_{ij}]$ of a simple oriented hypergraph $G$ is  defined by
\begin{equation*}
a_{ij}=\sum_{e\in E}sgn_{e}(v_{i},v_{j})\text{.}
\end{equation*}

Vertex switching an oriented hypergraph $G$ can be described as matrix conjugation of the adjacency matrix.  For a vertex switching function $\theta$, we define a diagonal matrix $D(\theta):=\text{diag}(\theta(v_i):v_i\in V)$.

\begin{proposition} Let $G$ be an oriented hypergraph.  If $\theta$ is a vertex switching function on $G$, then $A_{G^{\theta}}=D(\theta)^T A_{G} D(\theta)$.
\end{proposition}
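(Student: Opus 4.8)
The plan is to prove the identity by comparing the two matrices entry-by-entry, since both sides are indexed by $V\times V$ and the claim is purely a matter of how the scalars $\theta(v_i)$ interact with the adjacency sums. First I would write out the $(i,j)$ entry of the left-hand side directly from the definition of the adjacency matrix, but applied to the switched hypergraph $G^{\theta}$. By definition, $(A_{G^{\theta}})_{ij}=\sum_{e\in E}sgn_{e}^{\theta}(v_i,v_j)$, where $sgn^{\theta}$ is the adjacency signature produced by replacing $\sigma$ with $\sigma^{\theta}$. Unwinding this using $sgn_{e}(v,w)=-\sigma(v,e)\sigma(w,e)$ together with $\sigma^{\theta}(v,e)=\theta(v)\sigma(v,e)$ gives $sgn_{e}^{\theta}(v_i,v_j)=\theta(v_i)\,sgn_{e}(v_i,v_j)\,\theta(v_j)$, which is exactly the switched adjacency signature recorded in the Background section. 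The crucial structural point is that $\theta(v_i)$ and $\theta(v_j)$ depend only on the endpoints and not on the summation variable $e$, so they factor out of the sum over edges, yielding $(A_{G^{\theta}})_{ij}=\theta(v_i)\Bigl(\sum_{e\in E}sgn_{e}(v_i,v_j)\Bigr)\theta(v_j)=\theta(v_i)\,a_{ij}\,\theta(v_j)$.

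Second, I would compute the $(i,j)$ entry of the right-hand side. Because $D(\theta)$ is diagonal, it is symmetric, so $D(\theta)^{T}=D(\theta)$, and the conjugation collapses: in the double sum $\sum_{k,\ell}D(\theta)_{ik}(A_G)_{k\ell}D(\theta)_{\ell j}$ only the terms with $k=i$ and $\ell=j$ survive, giving $\theta(v_i)\,a_{ij}\,\theta(v_j)$. Matching this with the expression obtained for $(A_{G^{\theta}})_{ij}$ in the first step shows the two matrices agree in every entry, which completes the argument.

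The only step requiring genuine care is the identity $sgn_{e}^{\theta}(v_i,v_j)=\theta(v_i)\,sgn_{e}(v_i,v_j)\,\theta(v_j)$, and even this is a direct consequence of the definition of switching rather than a real obstacle; the remainder is routine diagonal-matrix bookkeeping. I therefore expect no substantive difficulty, and the proof reduces to carefully tracking that the switching scalars commute past the edge sum and reassemble as the outer conjugating factors.
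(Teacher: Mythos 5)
Your proof is correct. The paper states this proposition without any proof, treating it as immediate from the switched adjacency signature identity $sgn_{e}^{\theta}(v_i,v_j)=\theta(v_i)\,sgn_{e}(v_i,v_j)\,\theta(v_j)$ recorded in the Background section; your entry-by-entry verification (factor the switching scalars out of the edge sum, then match against the collapsed diagonal conjugation) is exactly the routine argument the paper implicitly relies on.
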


If $A_{\Gamma }$ is the adjacency matrix of an unsigned graph $\Gamma $, it
is well known that the $(i,j)$-entry of $A_{\Gamma }^{k}$ counts the number
of walks of length $k$ from $v_{i}$ to $v_{j}$.  Zaslavsky generalized this result to signed graphs \cite{MITTOSSG}. Here we present a generalization to oriented hypergraphs.

\begin{theorem}
\label{T1} If $G$ is a simple oriented hypergraph and $k$ a non-negative
integer, then the $(i,j)$-entry of $A_{G}^{k}$ is $w^{\pm }(v_{i},v_{j};k)$.
\end{theorem}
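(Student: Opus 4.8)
The plan is to prove the statement by induction on $k$, using the multiplicative decomposition of vertex-walks at an intermediate vertex together with the sign formula \eqref{SignOfAWalk}.

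First I would record a preliminary computation for lengths $0$ and $1$. A vertex-walk of length $0$ from $v_i$ to $v_j$ is a single vertex, which exists exactly when $i=j$ and carries sign $+1$ (the empty product), so $w^{\pm}(v_i,v_j;0)=\delta_{ij}=(A_G^{0})_{ij}$. A vertex-walk of length $1$ from $v_i$ to $v_j$ has the form $v_i,i_1,e,i_2,v_j$ with $i_1=(v_i,e)$, $i_2=(v_j,e)$ and $i_1\neq i_2$; by \eqref{SignOfAWalk} its sign is $(-1)^1\sigma(v_i,e)\sigma(v_j,e)=sgn_e(v_i,v_j)$. Summing over all edges $e$ yields $w^{\pm}(v_i,v_j;1)=\sum_{e\in E}sgn_e(v_i,v_j)=a_{ij}$. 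The length-$0$ computation serves as the base case of the induction, and the length-$1$ identity will be used to interpret the matrix entries $a_{\ell j}$ appearing in the inductive step.

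For the inductive step, assume $(A_G^{k})_{i\ell}=w^{\pm}(v_i,v_\ell;k)$ for all $i,\ell$. Writing $(A_G^{k+1})_{ij}=\sum_{\ell}(A_G^{k})_{i\ell}\,a_{\ell j}$, I would establish a sign-preserving bijection between vertex-walks of length $k+1$ from $v_i$ to $v_j$ and pairs $(W_1,W_2)$, where $W_1$ is a vertex-walk of length $k$ from $v_i$ to some $v_\ell$ and $W_2$ is a vertex-walk of length $1$ from $v_\ell$ to $v_j$; the decomposition simply cuts the length-$(k+1)$ walk at its penultimate vertex $v_\ell$. This splitting is well defined because the two incidences flanking $v_\ell$ form a pair of the type $i_{2h},i_{2h+1}$, which the walk definition leaves unconstrained (the restriction $i_{2h-1}\neq i_{2h}$ governs only the two incidences flanking an edge); hence every such concatenation is again a legal walk and the correspondence is a bijection.

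The crucial point, and the step I expect to require the most care, is that the sign is multiplicative across this cut, i.e. $sgn(W)=sgn(W_1)\,sgn(W_2)$. Since the sign of a vertex-walk is the product of the signs of its adjacencies and each adjacency is associated to a single edge of the walk, no adjacency is created at the junction vertex $v_\ell$; equivalently, from \eqref{SignOfAWalk} a vertex-walk of length $m$ uses $2m$ incidences, so the parity factor $(-1)^{\lfloor 2m/2\rfloor}=(-1)^m$ splits as $(-1)^{k}(-1)^{1}=(-1)^{k+1}$ under concatenation while the incidence products simply concatenate. Granting multiplicativity, a length-$(k+1)$ walk through $v_\ell$ is positive precisely when its two pieces share the same sign, so its signed count through $v_\ell$ equals $\bigl(w^{+}(v_i,v_\ell;k)-w^{-}(v_i,v_\ell;k)\bigr)\bigl(w^{+}(v_\ell,v_j;1)-w^{-}(v_\ell,v_j;1)\bigr)=w^{\pm}(v_i,v_\ell;k)\,a_{\ell j}$. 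Summing over $\ell$ gives $w^{\pm}(v_i,v_j;k+1)=\sum_{\ell}w^{\pm}(v_i,v_\ell;k)\,a_{\ell j}=(A_G^{k+1})_{ij}$, completing the induction.
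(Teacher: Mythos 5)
Your proof is correct and takes essentially the same approach as the paper's: induction on $k$ with base cases $k=0,1$, followed by a decomposition of length-$(k+1)$ vertex-walks at a vertex one step from an endpoint (you cut at the penultimate vertex, i.e.\ $A_G^{k+1}=A_G^{k}A_G$, while the paper cuts at the second vertex, i.e.\ $A_G^{k+1}=A_G A_G^{k}$ --- mirror images of the same argument). Your explicit checks that concatenation at the cut vertex is a legal walk and that the sign factors multiplicatively across the cut make rigorous two points the paper's proof uses only implicitly.
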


\begin{proof}
We prove this using mathematical induction.

\textit{Base case:} If $k=0$, then $A_{G}^{0}=I$, which is consistent with a 
$0$-walk travelling nowhere.

If $k=1$, then $\left( A_{G}^{1}\right) _{ij}=a_{ij}$, where%
\begin{equation*}
a_{ij} = \sum_{e\in E}sgn_{e}(v_{i},v_{j}) =
w^{+}(v_{i},v_{j};1)-w^{-}(v_{i},v_{j};1) = w^{\pm }(v_{i},v_{j};1).
\end{equation*}

\textit{Induction hypothesis:} Suppose that $\left( A_{G}^{k}\right)
_{ij}=w^{\pm }(v_{i},v_{j};k)$.

We calculate the $(i,j)$-entry of $A_{G}^{k+1}$ as follows:
\begin{align*}
\left( A_{G}^{k+1}\right) _{ij}=\left( A_{G}A_{G}^{k}\right) _{ij}& =\sum_{l=1}^{n}a_{il}\cdot w^{\pm }(v_{l},v_{j};k)\qquad \text{(by the
Induction Hypothesis)} \\
& =\sum_{l=1}^{n}\left( \sum_{e\in E}sgn_{e}(v_{i},v_{l})\right)
[w^{+}(v_{l},v_{j};k)-w^{-}(v_{l},v_{j};k)] \\
&
=\sum_{l=1}^{n}(w^{+}(v_{i},v_{l};1)-w^{-}(v_{i},v_{l};1))[w^{+}(v_{l},v_{j};k)-w^{-}(v_{l},v_{j};k)]
\\
&
=\sum_{l=1}^{n}[w^{+}(v_{i},v_{l};1)w^{+}(v_{l},v_{j};k)+w^{-}(v_{i},v_{l};1)w^{-}(v_{l},v_{j};k)
\\
& \qquad
-w^{-}(v_{i},v_{l};1)w^{+}(v_{l},v_{j};k)-w^{+}(v_{i},v_{l};1)w^{-}(v_{l},v_{j};k)].
\end{align*}
Notice that the number of positive walks of length $k+1$ from $v_{i}$ to $%
v_{j}$ with $v_{l}$ as the second vertex is 
\begin{equation*}
w^{+}(v_{i},v_{l};1)w^{+}(v_{l},v_{j};k)+w^{-}(v_{i},v_{l};1)w^{-}(v_{l},v_{j};k).
\end{equation*}
Similarly, the number of negative walks of length $k+1$ from $v_{i}$ to $%
v_{j}$ with $v_{l}$ as the second vertex is%
\begin{equation*}
w^{-}(v_{i},v_{l};1)w^{+}(v_{l},v_{j};k)+w^{+}(v_{i},v_{l};1)w^{-}(v_{l},v_{j};k)%
\text{.}
\end{equation*}
Since any walk of length $k+1$ from $v_{i}$ to $v_{j}$ must have one of the
vertices $v_{1},v_{2},\ldots ,v_{n}$ as its second vertex, it must be that%
\begin{equation*}
w^{+}(v_{i},v_{j};k+1)=\sum_{l=1}^{n}\big[ %
w^{+}(v_{i},v_{l};1)w^{+}(v_{l},v_{j};k)+w^{-}(v_{i},v_{l};1)w^{-}(v_{l},v_{j};k)%
\big],
\end{equation*}%
and%
\begin{equation*}
w^{-}(v_{i},v_{j};k+1)=\sum_{l=1}^{n}\big[ %
w^{-}(v_{i},v_{l};1)w^{+}(v_{l},v_{j};k)+w^{+}(v_{i},v_{l};1)w^{-}(v_{l},v_{j};k)%
\big] \text{.}
\end{equation*}
Thus, the $(i,j)$-entry of $A_{G}^{k+1}$ simplifies to%
\begin{equation*}
w^{+}(v_{i},v_{j};k+1)-w^{-}(v_{i},v_{j};k+1)=w^{\pm }(v_{i},v_{j};k+1).\qedhere
\end{equation*}
\end{proof}

Similarly, using the incidence dual $G^*$, we can obtain information about walks between edges of $G$.

\begin{theorem}
If $G$ is a simple oriented hypergraph and $k$ a non-negative integer, then 
\begin{equation*}
(i,j)\text{-entry of }A_{G^{\ast }}^{k}=w^{\pm }(v_{i}^{\ast },v_{j}^{\ast
};k)=w^{\pm }(e_{i},e_{j};k).
\end{equation*}
\end{theorem}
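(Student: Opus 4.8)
The plan is to deduce this theorem directly from Theorem~\ref{T1} applied to the incidence dual $G^{\ast}$, after checking that $G^{\ast}$ is again a simple oriented hypergraph and that the relevant signed walk counts translate correctly under dualization. The whole point is that $G^{\ast}$ is an oriented hypergraph of exactly the same type, so the hard work has already been done in Theorem~\ref{T1}; what remains is a dictionary between the two structures.

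First I would verify that $G^{\ast}$ is simple. Since $G$ is simple we have $\iota(v,e)\leq 1$ for all $v,e$, and because $\iota^{\ast}(e,v)=\iota(v,e)$, it follows that $\iota^{\ast}(e,v)\leq 1$. Hence $G^{\ast}=(E,V,\mathcal{I}^{\ast},\sigma^{\ast})$ is a simple oriented hypergraph whose vertex set is $E$ and whose edge set is $V$. Writing $v_{i}^{\ast}=e_{i}$ for the vertices of $G^{\ast}$, Theorem~\ref{T1} applies verbatim to $G^{\ast}$ and yields that the $(i,j)$-entry of $A_{G^{\ast}}^{k}$ equals $w^{\pm}(v_{i}^{\ast},v_{j}^{\ast};k)$, where these walks are taken inside $G^{\ast}$. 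This establishes the first equality of the statement.

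Next I would identify walks in $G^{\ast}$ with edge-walks in $G$ to get the second equality. A vertex-walk in $G^{\ast}$ alternates between vertices of $G^{\ast}$ (the edges of $G$) and edges of $G^{\ast}$ (the vertices of $G$); reversing the roles of vertices and edges, such a sequence is precisely an edge-walk in $G$ of the same length, and this gives a bijection because $\mathcal{I}^{\ast}$ is just $\mathcal{I}$ with its first two coordinates swapped. It then remains to match signs. The sign of a vertex-walk in $G^{\ast}$ is the product of its adjacency signs computed in $G^{\ast}$, and for each adjacency with respect to an edge $v$ of $G^{\ast}$ (that is, a vertex $v$ of $G$), the definition $\sigma^{\ast}(e,v,k)=\sigma(v,e,k)$ gives
\begin{equation*}
sgn_{v}^{G^{\ast}}(e,k_{1};f,k_{2})=-\sigma^{\ast}(e,v,k_{1})\sigma^{\ast}(f,v,k_{2})=-\sigma(v,e,k_{1})\sigma(v,f,k_{2})=sgn_{v}(e,k_{1};f,k_{2}).
\end{equation*}
Thus each adjacency sign of $G^{\ast}$ equals the corresponding co-adjacency sign of $G$, so the product defining the sign of a vertex-walk in $G^{\ast}$ coincides with the product of co-adjacency signs defining the sign of the associated edge-walk in $G$. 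Since the bijection preserves length as well, it carries positive walks to positive walks and negative to negative, whence $w^{\pm}(v_{i}^{\ast},v_{j}^{\ast};k)=w^{\pm}(e_{i},e_{j};k)$.

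I expect the only genuine subtlety to lie in this last step: confirming that the walk bijection respects length and sign simultaneously, so that the $w^{+}$ and $w^{-}$ counts are matched individually rather than merely their difference. Everything else reduces to the already-established dual relationships $\iota^{\ast}(e,v)=\iota(v,e)$ and $\sigma^{\ast}(e,v,k)=\sigma(v,e,k)$ together with the self-application of Theorem~\ref{T1}, so the proof is essentially a bookkeeping argument rather than a new induction.
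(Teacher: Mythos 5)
Your proof is correct and is essentially the paper's intended argument: the paper states this theorem without proof, presenting it as following ``similarly'' from Theorem~\ref{T1} applied to the incidence dual. Your verification that $G^{\ast}$ is simple, together with the dictionary identifying vertex-walks in $G^{\ast}$ with edge-walks in $G$ (adjacency signs in $G^{\ast}$ equaling co-adjacency signs in $G$), fills in exactly the bookkeeping the paper leaves implicit.
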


The next result gives an explicit relationship between the adjacency matrix of the line graph of an oriented signed graph and the incidence dual.

\begin{theorem}\label{AdjLineGraphandDual} Let $\Sigma =(\Gamma ,sgn )$ be a simple signed graph. Let $G=(\Sigma,\tau)$ be an oriented signed graph.  Then \[A_{(\Lambda(\Gamma),\tau_{\Lambda})}=A_{G^{\ast }}.\]
\end{theorem}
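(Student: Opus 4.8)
The plan is to verify the asserted matrix identity entrywise. Both $A_{(\Lambda(\Gamma),\tau_{\Lambda})}$ and $A_{G^{\ast}}$ are indexed by the same set: the vertices of $\Lambda(\Gamma)$ are the edges of $\Gamma$, while the vertices of $G^{\ast}$ are by definition the edges of $G$, which are precisely the edges of $\Gamma$. So I would fix two such edges $e_{i}$ and $e_{j}$ and show that the corresponding entries of the two matrices agree, including the pattern of zeros.

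First I would unwind $A_{G^{\ast}}$. Since the edges of $G^{\ast}$ are the vertices of $G$, the adjacency-matrix formula gives $(A_{G^{\ast}})_{ij}=\sum_{v\in V}sgn_{v}^{\ast}(e_{i},e_{j})$, where each summand is an adjacency sign in $G^{\ast}$ taken with respect to the edge $v$. By the definition of the dual orientation, $\sigma^{\ast}(e,v)=\sigma(v,e)$, so $sgn_{v}^{\ast}(e_{i},e_{j})=-\sigma^{\ast}(e_{i},v)\sigma^{\ast}(e_{j},v)=-\sigma(v,e_{i})\sigma(v,e_{j})$, which is exactly the co-adjacency sign $sgn_{v}(e_{i},e_{j})$ of $G$. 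Two edges $e_{i},e_{j}$ are adjacent with respect to $v$ in $G^{\ast}$ precisely when they share the vertex $v$ in $\Gamma$; since $\Gamma$ is simple, distinct edges share at most one vertex, so the sum collapses to a single term. Hence if $e_{i}$ and $e_{j}$ meet at a vertex $v$ then $(A_{G^{\ast}})_{ij}=-\sigma(v,e_{i})\sigma(v,e_{j})$, and otherwise the entry is $0$. The diagonal vanishes because simplicity of $G^{\ast}$ means $\iota^{\ast}(e_{i},v)\le 1$, forbidding the two distinct incidences a self-adjacency would require.

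Next I would unwind $A_{(\Lambda(\Gamma),\tau_{\Lambda})}$. Its $(i,j)$-entry is the signature $sgn_{\Lambda(\tau)}$ of the line-graph edge joining $e_{i}$ and $e_{j}$ when they are adjacent in $\Lambda(\Gamma)$, and $0$ otherwise; adjacency in $\Lambda(\Gamma)$ again means $e_{i}$ and $e_{j}$ share a vertex of $\Gamma$, so the two matrices have the same support. Writing $e_{i}=e_{ij}$ and $e_{j}=e_{jk}$ for edges meeting at $v_{j}$, the orientation relation \eqref{SGLGorientRel} gives $\tau_{\Lambda}(e_{ij},e_{ij}e_{jk})=\tau(v_{j},e_{ij})$ and $\tau_{\Lambda}(e_{jk},e_{ij}e_{jk})=\tau(v_{j},e_{jk})$, so the defining relation between an orientation and its signature yields $sgn_{\Lambda(\tau)}(e_{ij}e_{jk})=-\tau_{\Lambda}(e_{ij},e_{ij}e_{jk})\,\tau_{\Lambda}(e_{jk},e_{ij}e_{jk})=-\tau(v_{j},e_{ij})\tau(v_{j},e_{jk})$.

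Finally I would reconcile the two computations. By Proposition~\ref{SGisaOHG} an oriented signed graph is the same object as a $2$-uniform simple oriented hypergraph under the identification $\sigma(v,e)=\tau(v,e)$ on incident pairs; indeed both conventions encode the same edge sign via $-\sigma(v,e)\sigma(w,e)=sgn(e)=-\tau(v,e)\tau(w,e)$. Substituting this identification into the two displayed expressions shows that each entry of $A_{(\Lambda(\Gamma),\tau_{\Lambda})}$ equals the corresponding entry of $A_{G^{\ast}}$, and since the zero patterns already coincide, the matrices are equal. I expect the main obstacle to be purely organizational: keeping the dualized incidence data straight so that ``adjacency with respect to a vertex of $G^{\ast}$'' is correctly recognized as ``sharing a vertex of $\Gamma$,'' and then aligning the sign conventions of $\sigma$ and $\tau$ through Proposition~\ref{SGisaOHG} without introducing a spurious sign.
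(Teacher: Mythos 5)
Your proof is correct and follows essentially the same route as the paper's: identify $\sigma$ with $\tau$ via Proposition~\ref{SGisaOHG}, compute the entry of $A_{G^{\ast}}$ as the co-adjacency sign $-\sigma(v,e_i)\sigma(v,e_j)$, and convert it to $sgn_{\Lambda(\tau)}(e_ie_j)$ through the orientation relation \eqref{SGLGorientRel}. The only difference is that you spell out details the paper leaves implicit (the collapse of the sum over $v$ to a single term by simplicity of $\Gamma$, the matching zero patterns, and the vanishing diagonal), which is added care rather than a different argument.
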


\begin{proof}
By Proposition \ref{SGisaOHG}, $G$ may be treated as both a signed
graph and a 2-uniform simple oriented hypergraph. Therefore, $G$ has an
incidence orientation $\sigma$ defined by $\sigma(v,e)=\tau(v,w)$, for all $(v,e)\in \mathcal{I}$. 

The $(i,j)$-entry of $A_{G^{\ast }}$ is 
\begin{align*}
sgn_{e^*}(v_i^*,v_j^*) = sgn_v(e_i,e_j) = -\sigma(v,e_i)\sigma(v,e_j) = -\tau(v,e_i)\tau(v,e_j)&= -\tau_{\Lambda}(e_i,e_ie_j)\tau_{\Lambda}(e_j,e_ie_j)\\
&= sgn_{\Lambda(\tau)}(e_ie_j),
\end{align*}
which is the $(i,j)$-entry of $A_{(\Lambda (\Gamma ),\tau _{\Lambda })}$.
Since both $A_{(\Lambda (\Gamma ),\tau _{\Lambda })}$ and $A_{G^{\ast }}$
are $m\times m$ matrices, the proof is complete.
\end{proof}

\section{Incidence, degree, and Laplacian matrices}\label{IncLapSection}

Given a labeling $v_{1}$, $v_{2}$,\ldots , $v_{n}$ of the elements of $V$,
and $e_{1}$, $e_{2}$,\ldots , $e_{m}$ of the elements of $E$ of an oriented
hypergraph $G$, the \emph{incidence matrix} $\mathrm{H}_{G}=[\eta _{ij}]$ is the $n\times m$ matrix defined by 
\begin{equation}\label{etaelements}
\eta _{ij}=\sum_{k=1}^{\iota (v_{i},e_{j})}\sigma (v_{i},e_{j},k)\text{.}
\end{equation}
If $G$ is simple, then Equation \eqref{etaelements} is equivalent to
\begin{equation*}
\eta _{ij}=
\begin{cases} \sigma(v_{i},e_{j}) & \text{if }(v_{i},e_{j})\in \mathcal{I},\\
0 &\text{otherwise.}
\end{cases}
\end{equation*}

For signed and unsigned graphs, the line graph is the graphical
approximation of incidence duality. One advantage of incidence duality is
that it is an involution and provides a hypergraphic analog to transposition
of matrices.

\begin{theorem}
\label{T2}If $G$ is an oriented hypergraph, then $\mathrm{H}_{G}^T=%
\mathrm{H}_{G^{\ast }}$.
\end{theorem}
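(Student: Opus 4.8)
The plan is to prove the identity by a direct entry-by-entry comparison, unwinding the definition of the incidence dual and matching indices under transposition. Transposition swaps rows and columns, while incidence duality swaps the roles of vertices and edges, so the statement should collapse to the observation that the defining data of $G^{\ast}$ is nothing but a relabeling of the defining data of $G$.

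First I would record the shapes to confirm the two sides occupy the same matrix space. The matrix $\mathrm{H}_G$ is $n\times m$, so $\mathrm{H}_G^T$ is $m\times n$; and since $G^{\ast}$ has $m$ vertices (the edges of $G$) and $n$ edges (the vertices of $G$), its incidence matrix $\mathrm{H}_{G^{\ast}}$ is likewise $m\times n$. Thus it suffices to show the entries agree.

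Next I would fix indices $i\in\{1,\dots,n\}$ and $j\in\{1,\dots,m\}$ and compute the $(j,i)$-entry of $\mathrm{H}_{G^{\ast}}$ directly from Equation \eqref{etaelements} applied to the dual. Writing $\mathrm{H}_{G^{\ast}}=[\eta^{\ast}_{ji}]$, this entry is $\sum_{k=1}^{\iota^{\ast}(e_j,v_i)}\sigma^{\ast}(e_j,v_i,k)$. Substituting the two defining relations of the dual, namely $\iota^{\ast}(e_j,v_i)=\iota(v_i,e_j)$ and $\sigma^{\ast}(e_j,v_i,k)=\sigma(v_i,e_j,k)$, rewrites this sum as $\sum_{k=1}^{\iota(v_i,e_j)}\sigma(v_i,e_j,k)$, which is exactly $\eta_{ij}$. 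Hence $\eta^{\ast}_{ji}=\eta_{ij}$ for all $i,j$, which is precisely the assertion that $\mathrm{H}_{G^{\ast}}=\mathrm{H}_G^T$.

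The argument is pure bookkeeping with no substantive obstacle; the only point requiring care is tracking the index transposition correctly, that is, being sure the vertex labeling $v_1,\dots,v_n$ of $G$ serves as the edge labeling of $G^{\ast}$ and the edge labeling $e_1,\dots,e_m$ of $G$ serves as the vertex labeling of $G^{\ast}$, so that the $(j,i)$-entry of $\mathrm{H}_{G^{\ast}}$ is indeed the entry to compare with the $(i,j)$-entry of $\mathrm{H}_G$.
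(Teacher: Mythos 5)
Your proof is correct: the paper actually states Theorem \ref{T2} with no proof at all, treating it as immediate from the definitions of $\mathcal{I}^{\ast}$, $\sigma^{\ast}$, and $\iota^{\ast}$, and your entry-by-entry computation is precisely the bookkeeping that justifies that omission. The index-tracking point you flag at the end (edges of $G$ serving as vertices of $G^{\ast}$ and vice versa, so that $\eta^{\ast}_{ji}=\eta_{ij}$) is exactly the content of the theorem, and you handle it correctly.
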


Another familiar graphic matrix that has a natural extension to oriented
hypergraphs is the degree matrix.  The \emph{degree matrix} of an oriented
hypergraph $G$ is $D_{G}=[d_{ij}]:=\text{diag}(\deg (v_{1}),\ldots ,\deg
(v_{n}))$.

The \emph{Laplacian matrix} is defined as $L_{G}:=D_{G}-A_{G}.$

\begin{theorem}
\label{T4}If $G$ is a simple oriented hypergraph, then $L_{G}=\mathrm{H}_{G}\mathrm{H}_{G}^T$. 
\end{theorem}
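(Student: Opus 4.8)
The plan is to verify the matrix identity entrywise, computing the $(i,j)$-entry of $\mathrm{H}_{G}\mathrm{H}_{G}^{T}$ directly from the definition of the incidence matrix and matching it against the corresponding entry of $L_{G}=D_{G}-A_{G}$. Writing $\mathrm{H}_{G}=[\eta_{ij}]$, the product expands as
\[
(\mathrm{H}_{G}\mathrm{H}_{G}^{T})_{ij}=\sum_{l=1}^{m}\eta_{il}\eta_{jl}=\sum_{e\in E}\eta_{ie}\eta_{je},
\]
so I would reindex the inner sum over the edges $e$ and observe that, since $G$ is simple, each summand $\eta_{ie}\eta_{je}$ is nonzero only when both $v_{i}$ and $v_{j}$ are incident to $e$, in which case it equals $\sigma(v_{i},e)\sigma(v_{j},e)$. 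From here the argument splits into the diagonal and off-diagonal cases.

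For the diagonal entries ($i=j$), I would use that $\eta_{ie}^{2}=\sigma(v_{i},e)^{2}=1$ for every edge $e$ incident to $v_{i}$ and $0$ otherwise, so the sum counts exactly the incidences containing $v_{i}$; that is, $(\mathrm{H}_{G}\mathrm{H}_{G}^{T})_{ii}=\deg(v_{i})=(D_{G})_{ii}$. To match $L_{G}$ I must also check that $A_{G}$ has zero diagonal. This is the one point deserving care: by the adjacency definition a vertex is adjacent to itself via $e$ only if there are two distinct incidences $(v_{i},e,k_{1})\neq(v_{i},e,k_{2})$, which cannot happen in a simple hypergraph, so $sgn_{e}(v_{i},v_{i})=0$ and $a_{ii}=0$. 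Hence $(L_{G})_{ii}=(D_{G})_{ii}-0=\deg(v_{i})$, agreeing with the product.

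For the off-diagonal entries ($i\neq j$), the vertices $v_{i}$ and $v_{j}$ are distinct, so whenever both are incident to $e$ they are genuinely adjacent via $e$ and $sgn_{e}(v_{i},v_{j})=-\sigma(v_{i},e)\sigma(v_{j},e)$. Thus
\[
(\mathrm{H}_{G}\mathrm{H}_{G}^{T})_{ij}=\sum_{e\in E}\sigma(v_{i},e)\sigma(v_{j},e)=-\sum_{e\in E}sgn_{e}(v_{i},v_{j})=-a_{ij},
\]
while $(L_{G})_{ij}=(D_{G})_{ij}-a_{ij}=-a_{ij}$ since $D_{G}$ is diagonal. The two entries coincide, and combined with the diagonal case this establishes $L_{G}=\mathrm{H}_{G}\mathrm{H}_{G}^{T}$ entry by entry. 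The computation is otherwise routine; the only genuine obstacle is the self-adjacency bookkeeping in the diagonal case, which is exactly where simplicity of $G$ is used.
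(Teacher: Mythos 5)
Your proof is correct and follows essentially the same entrywise computation as the paper's: expand $(\mathrm{H}_{G}\mathrm{H}_{G}^{T})_{ij}=\sum_{e\in E}\sigma(v_{i},e)\sigma(v_{j},e)$, then split into the diagonal case (yielding $\deg(v_{i})$) and the off-diagonal case (yielding $-a_{ij}$ via $sgn_{e}(v_{i},v_{j})=-\sigma(v_{i},e)\sigma(v_{j},e)$). Your explicit check that $a_{ii}=0$ because a simple hypergraph admits no self-adjacency is a point the paper leaves implicit when it concludes $\mathrm{H}_{G}\mathrm{H}_{G}^{T}=D_{G}-A_{G}$, and making it explicit is a small but genuine improvement in rigor.
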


\begin{proof}
The $(i,j)$-entry of $\mathrm{H}_{G}\mathrm{H}_{G}^{T}$ corresponds to the $%
i^{\text{th}}$ row of $\mathrm{H}_{G}$, indexed by $v_{i}\in V$, multiplied
by the $j^{\text{th}}$ column of $\mathrm{H}_{G}^{T}$, indexed by $v_{j}\in V
$. Therefore, this entry is precisely 
\begin{equation*}
\sum_{e\in E}\eta _{v_{i}e}\eta _{v_{j}e}=\sum_{e\in E}\sigma
(v_{i},e)\sigma (v_{j},e).
\end{equation*}
If $i=j$, then the sum simplifies to 
\begin{equation*}
\sum_{e\in E}|\sigma (v_{i},e)|^{2}=\text{deg}(v_{i}),
\end{equation*}
since $|\sigma (v_{i},e)|=1$ if $v_{i}$ is incident to $e$. If $i\neq j$,
then the sum simplifies to 
\begin{equation*}
\sum_{e\in E}\sigma (v_{i},e)\sigma (v_{j},e)=\sum_{e\in
E}-sgn_{e}(v_{i},v_{j})=-a_{ij}.
\end{equation*}
Hence, $\mathrm{H}_{G}\mathrm{H}_{G}^{T}=D_{G}-A_{G}=L_{G}$.
\end{proof}

Vertex switching an oriented hypergraph $G$ can be described as matrix multiplication of the incidence matrix, and matrix conjugation of the Laplacian matrix.

\begin{proposition} Let $G$ be an oriented hypergraph.  If $\theta$ is a vertex switching function on $G$, then 
\begin{enumerate}
\item $\mathrm{H}_{G^{\theta}}=D(\theta)\mathrm{H}_{G}$, and
\item $L_{G^{\theta}}=D(\theta)^T L_{G} D(\theta)$.
\end{enumerate}
\end{proposition}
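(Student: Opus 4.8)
The plan is to prove the two identities in order, establishing (1) by a direct entrywise computation and then feeding it into the incidence factorization of the Laplacian from Theorem \ref{T4} to obtain (2). For part (1), I would argue entry by entry. By \eqref{etaelements} applied to the switched orientation $\sigma^{\theta}$, the $(i,j)$-entry of $\mathrm{H}_{G^{\theta}}$ is
\[
\sum_{k=1}^{\iota(v_i,e_j)}\sigma^{\theta}(v_i,e_j,k)=\sum_{k=1}^{\iota(v_i,e_j)}\theta(v_i)\sigma(v_i,e_j,k)=\theta(v_i)\sum_{k=1}^{\iota(v_i,e_j)}\sigma(v_i,e_j,k)=\theta(v_i)\eta_{ij},
\]
where the key point is that $\theta(v_i)$ depends only on the row index $v_i$ and so pulls out of the sum over the multiplicity index $k$. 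Since $D(\theta)$ is diagonal with $(i,i)$-entry $\theta(v_i)$, left multiplication by $D(\theta)$ rescales the $i$-th row of $\mathrm{H}_G$ by exactly $\theta(v_i)$, giving $(D(\theta)\mathrm{H}_G)_{ij}=\theta(v_i)\eta_{ij}$, which matches the entry just computed. Hence $\mathrm{H}_{G^{\theta}}=D(\theta)\mathrm{H}_G$.

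For part (2) I would substitute part (1) into the factorization $L_G=\mathrm{H}_G\mathrm{H}_G^T$ of Theorem \ref{T4}. Applying that factorization to $G^{\theta}$ and using $\mathrm{H}_{G^{\theta}}=D(\theta)\mathrm{H}_G$ yields
\[
L_{G^{\theta}}=\mathrm{H}_{G^{\theta}}\mathrm{H}_{G^{\theta}}^T=\bigl(D(\theta)\mathrm{H}_G\bigr)\bigl(D(\theta)\mathrm{H}_G\bigr)^T=D(\theta)\,\mathrm{H}_G\mathrm{H}_G^T\,D(\theta)^T=D(\theta)\,L_G\,D(\theta)^T.
\]
The only remaining bookkeeping is to reconcile this with the stated form $D(\theta)^T L_G D(\theta)$: because $D(\theta)$ is diagonal it equals its own transpose, so $D(\theta)L_G D(\theta)^T=D(\theta)^T L_G D(\theta)$, as claimed.

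I do not anticipate a genuine obstacle, since both identities reduce to unwinding the definition of $\sigma^{\theta}$ together with the diagonal structure of $D(\theta)$. The single point that requires care is the transpose bookkeeping in part (2): one must explicitly note that $D(\theta)=D(\theta)^T$ to convert the output $D(\theta)L_G D(\theta)^T$ of the matrix computation into the symmetric conjugation written in the statement. An alternative route to part (2) that bypasses the factorization would instead combine the earlier adjacency-switching proposition $A_{G^{\theta}}=D(\theta)^T A_G D(\theta)$ with the fact that switching leaves the incidence structure, and hence the degree matrix, unchanged, so that $D_{G^{\theta}}=D_G=D(\theta)^T D_G D(\theta)$; subtracting the two conjugated matrices then gives $L_{G^{\theta}}=D(\theta)^T(D_G-A_G)D(\theta)=D(\theta)^T L_G D(\theta)$ directly.
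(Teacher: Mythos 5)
The paper states this proposition without any proof at all, so there is nothing to compare against line by line; your proposal supplies the missing argument, and it is essentially correct. Part (1) is the expected entrywise unwinding of the definition $\sigma^{\theta}(v,e,k)=\theta(v)\sigma(v,e,k)$ against Equation \eqref{etaelements}, and it works for arbitrary (not necessarily simple) oriented hypergraphs. For part (2), one caveat on your primary route: it invokes Theorem \ref{T4}, which the paper states only for \emph{simple} oriented hypergraphs, whereas the proposition is phrased for an arbitrary oriented hypergraph. You should note that switching leaves the underlying hypergraph $(V,E,\mathcal{I})$ untouched, so $G^{\theta}$ is simple exactly when $G$ is, and the factorization $L_{G^{\theta}}=\mathrm{H}_{G^{\theta}}\mathrm{H}_{G^{\theta}}^{T}$ is therefore legitimate precisely when it is legitimate for $G$ itself; since the paper only defines $A_G$ (and hence $L_G$) for simple oriented hypergraphs, simplicity is implicitly in force and no real gap results, but the hypothesis should be flagged. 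Your alternative route is actually the more robust one: it needs only the earlier adjacency-switching proposition $A_{G^{\theta}}=D(\theta)^{T}A_{G}D(\theta)$ together with the observation that switching preserves the incidence structure, so $D_{G^{\theta}}=D_{G}=D(\theta)^{T}D_{G}D(\theta)$ (using that diagonal matrices commute and $D(\theta)^{2}=I$), and it bypasses Theorem \ref{T4} entirely. The transpose bookkeeping $D(\theta)=D(\theta)^{T}$ is correctly handled in both routes.
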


The following corollary is immediate from Theorems \ref{T2} and \ref%
{T4}.  This generalizes the classical relationship between the incidence, degree, adjacency and Laplacian matrices known for graphs and signed graphs.

\begin{corollary}
\label{T5}The following relations hold for a simple oriented hypergraph $G$.

\begin{enumerate}
\item $L_{G}=D_{G}-A_{G}=\mathrm{H}_{G}\mathrm{H}_{G}^T$,

\item $L_{G^{\ast }}=D_{G^{\ast
}}-A_{G^{\ast }}=\mathrm{H}_{G}^T\mathrm{H}_{G}.$
\end{enumerate}
\end{corollary}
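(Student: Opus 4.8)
The plan is to obtain both relations as direct consequences of Theorems \ref{T2} and \ref{T4}, together with the involutive nature of incidence duality recorded in Proposition \ref{P2}; no genuinely new computation is required. For part (1), the leftmost equality $L_{G}=D_{G}-A_{G}$ is just the definition of the Laplacian matrix, and the equality $D_{G}-A_{G}=\mathrm{H}_{G}\mathrm{H}_{G}^{T}$ is precisely the content of Theorem \ref{T4}. So part (1) is immediate.

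For part (2), the first thing I would establish is a small preliminary observation that Theorem \ref{T4} may legitimately be applied to the dual $G^{\ast}$: since simplicity of $G$ means $\iota(v,e)\le 1$ for all $v$ and $e$, and since $\iota^{\ast}(e,v)=\iota(v,e)$ by definition of the dual incidence function, the bound is inherited and $G^{\ast}$ is again a simple oriented hypergraph. This step is essential because Theorem \ref{T4} is stated only in the simple case.

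Granting that, I would apply Theorem \ref{T4} to $G^{\ast}$ to get $L_{G^{\ast}}=D_{G^{\ast}}-A_{G^{\ast}}=\mathrm{H}_{G^{\ast}}\mathrm{H}_{G^{\ast}}^{T}$, and then rewrite the right-hand side in terms of $\mathrm{H}_{G}$. By Theorem \ref{T2} applied to $G$, the first factor is $\mathrm{H}_{G^{\ast}}=\mathrm{H}_{G}^{T}$. For the second factor I would apply Theorem \ref{T2} to $G^{\ast}$, giving $\mathrm{H}_{G^{\ast}}^{T}=\mathrm{H}_{(G^{\ast})^{\ast}}=\mathrm{H}_{G^{\ast\ast}}$, and then invoke Proposition \ref{P2} to replace $G^{\ast\ast}$ by $G$, so that $\mathrm{H}_{G^{\ast}}^{T}=\mathrm{H}_{G}$. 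Substituting both factors yields $L_{G^{\ast}}=\mathrm{H}_{G}^{T}\mathrm{H}_{G}$, which completes part (2).

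The main obstacle here is essentially bookkeeping rather than mathematics: the only point demanding care is confirming that duality preserves simplicity, since otherwise Theorem \ref{T4} would not be available for $G^{\ast}$, and then keeping the two transpositions aligned correctly through the involution $G^{\ast\ast}=G$. Once those are handled, both equalities follow by pure substitution.
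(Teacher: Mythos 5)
Your proposal is correct and follows essentially the same route as the paper, which states the corollary as immediate from Theorems \ref{T2} and \ref{T4} applied to $G$ and to its dual $G^{\ast}$. Your extra care in verifying that $G^{\ast}$ inherits simplicity (so Theorem \ref{T4} applies to it) is a worthwhile detail the paper leaves implicit; note only that the second factor could be handled more directly by writing $\mathrm{H}_{G^{\ast}}^{T}=(\mathrm{H}_{G}^{T})^{T}=\mathrm{H}_{G}$, avoiding the detour through Proposition \ref{P2}.
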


If $G$ is a simple $k$-uniform oriented hypergraph, we can specialize the above as follows.
\begin{corollary}
\label{T6} If $G$ is a simple $k$-uniform oriented hypergraph, then%
\begin{equation*}
\mathrm{H}_{G}^T\mathrm{H}_{G}=kI-A_{G^{\ast }}.
\end{equation*}
\end{corollary}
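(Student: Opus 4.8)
The plan is to reduce the claim to Corollary \ref{T5}(2) and then identify the degree matrix $D_{G^{\ast}}$ explicitly. Corollary \ref{T5}(2) already supplies the identity $\mathrm{H}_{G}^T\mathrm{H}_{G}=D_{G^{\ast}}-A_{G^{\ast}}$ for any simple oriented hypergraph, so the only remaining task is to verify that the hypothesis of $k$-uniformity forces $D_{G^{\ast}}=kI$. Once that is established, substituting $D_{G^{\ast}}=kI$ into the corollary yields $\mathrm{H}_{G}^T\mathrm{H}_{G}=kI-A_{G^{\ast}}$ immediately.

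To compute $D_{G^{\ast}}$, I would unwind the definition of the incidence dual. The vertices of $G^{\ast}$ are precisely the edges of $G$, and for each such vertex $e^{\ast}$ its degree is the number of incidences of $G^{\ast}$ containing it. Since $\mathcal{I}^{\ast}=\{(e,v,k):(v,e,k)\in\mathcal{I}\}$, the incidences of $G^{\ast}$ containing $e^{\ast}$ correspond bijectively to the incidences of $G$ containing the edge $e$; that count is exactly the size of $e$ in $G$. Thus $\deg_{G^{\ast}}(e^{\ast})$ equals the size of the edge $e$ for every $e\in E$.

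Invoking $k$-uniformity of $G$, every edge of $G$ has size $k$, so every vertex of $G^{\ast}$ has degree $k$; equivalently, $G^{\ast}$ is $k$-regular. Since $D_{G^{\ast}}=\mathrm{diag}(\deg(e_{1}^{\ast}),\ldots,\deg(e_{m}^{\ast}))$ is an $m\times m$ diagonal matrix with all diagonal entries equal to $k$, we conclude $D_{G^{\ast}}=kI$. Combining this with Corollary \ref{T5}(2) completes the proof.

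I do not anticipate a genuine obstacle here, as the argument is essentially a bookkeeping verification. The only point requiring care is the translation between $k$-uniformity of $G$ and $k$-regularity of $G^{\ast}$: one must track that it is the \emph{size} of edges in $G$ (not the degree of vertices in $G$) that controls the diagonal of $D_{G^{\ast}}$, which is exactly where the duality swap of the two roles matters.
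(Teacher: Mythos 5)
Your proof is correct and follows essentially the same route as the paper: invoke Corollary \ref{T5}(2) and observe that $k$-uniformity of $G$ makes the incidence dual $k$-regular, so $D_{G^{\ast}}=kI$. Your unwinding of the incidence-dual definition is just a more explicit version of the paper's one-line justification.
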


\begin{proof}
Since every edge has size $k$, the incidence dual is $k$-regular.  Hence, $D_{G^{\ast }}=kI$.
\end{proof}

Now we have enough machinery to show that Corollaries \ref{T5} and \ref{T6} are generalizations of signed graphic relations, and can also be viewed as a generalzation of the classical relationship known for graphs (see \cite{MITTOSSG} for details).

\begin{corollary}
\label{T7} Let $\Sigma =(\Gamma ,sgn )$ be a simple signed graph. Let $G=(\Sigma,\tau)$ be an oriented signed graph.  Then
\begin{enumerate}
\item $\mathrm{H}_{G}\mathrm{H}_{G}^{T}=D_{G}-A_{G}=L_{G}$,
\item $\mathrm{H}_{G}^{T}\mathrm{H}_{G}=2I-A_{(\Lambda (\Gamma ),\tau _{\Lambda })}$.
\end{enumerate}
\end{corollary}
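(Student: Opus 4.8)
The plan is to derive both statements as direct specializations of the general oriented hypergraphic results already established, using Proposition \ref{SGisaOHG} to pass freely between the oriented signed graph $G$ and its avatar as a $2$-uniform simple oriented hypergraph.

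First I would observe that by Proposition \ref{SGisaOHG} the oriented signed graph $G = (\Sigma, \tau)$ is a $2$-uniform simple oriented hypergraph, so every result of Section \ref{IncLapSection} applies to it verbatim. Part (1) is then nothing more than Corollary \ref{T5}(1): Theorem \ref{T4} gives $\mathrm{H}_G \mathrm{H}_G^T = L_G$, and $L_G = D_G - A_G$ holds by the definition of the Laplacian. No computation beyond citing these is needed.

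For part (2), I would apply Corollary \ref{T6} with $k = 2$, which is legitimate precisely because $G$ is $2$-uniform; this yields $\mathrm{H}_G^T \mathrm{H}_G = 2I - A_{G^*}$. It then remains to identify the dual adjacency matrix $A_{G^*}$ with the line-graph adjacency matrix $A_{(\Lambda(\Gamma), \tau_\Lambda)}$, which is exactly the content of Theorem \ref{AdjLineGraphandDual}. Substituting gives $\mathrm{H}_G^T \mathrm{H}_G = 2I - A_{(\Lambda(\Gamma), \tau_\Lambda)}$, as claimed.

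There is no genuine obstacle here; the substance lives in the earlier theorems, and the corollary is essentially a bookkeeping assembly of them. The only points requiring care are verifying that the hypotheses \emph{simple} and \emph{$2$-uniform} are indeed met so that Corollary \ref{T6} and Theorem \ref{AdjLineGraphandDual} both apply, and confirming that the incidence orientation $\sigma$ built from $\tau$ via $\sigma(v,e) = \tau(v,e)$ is the same one implicitly used in Theorem \ref{AdjLineGraphandDual}. Once the dictionary of Proposition \ref{SGisaOHG} is in place, both equalities follow immediately.
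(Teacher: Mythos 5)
Your proposal is correct and follows essentially the same route as the paper's own proof: part (1) by citing Corollary \ref{T5}, and part (2) by combining Corollary \ref{T6} (applicable since $G$ is $2$-uniform) with the identification $A_{G^{\ast}}=A_{(\Lambda(\Gamma),\tau_{\Lambda})}$ from Theorem \ref{AdjLineGraphandDual}. In fact you cite the correct result for that last step, whereas the paper's text contains a slip, attributing it to ``Theorem \ref{T6}'' when it means Theorem \ref{AdjLineGraphandDual}.
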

\begin{proof}
By Corollary \ref{T5}, (1) is immediate.  Since $G$ is 2-uniform, by Corollaries \ref{T5} and \ref{T6}, $\mathrm{H}_{G}^T\mathrm{H}_G = 2I-A_{G^*}$.  Also, by Theorem \ref{T6}, $A_{G^*}=A_{(\Lambda(\Gamma),\tau_{\Lambda})}$. The result follows.
\end{proof}

\section{Walk and weak walk matrices}\label{WWSection}

Here we study the underlying structure of the adjacency, incidence, and degree matrices to provide a unified combinatorial interpretation of the entries of $L$.

Let $A_{1},A_{2}\in \{V,E\}$ and $X_{(G,A_{1},A_{2},k)}=[x_{ij}]$ be the $A_{1}\times A_{2}$ matrix where $x_{ij}=w^{\pm }(a_{i},a_{j};k)$. \ The matrix $X_{(G,A_{1},A_{2},k)}$ is called a $k$\emph{-walk matrix} of an oriented hypergraph $G$.  If $A_{1}=A_{2}$, then we will assume that $k$ is a nonnegative integer.  If $A_{1}\neq A_{2}$, then we will assume that $k$ is a nonnegative half-integer.

\begin{lemma}
\label{W1}If $G$ is a simple oriented hypergraph, then $X_{(G,V,V,k)}=A^{k}$.
\end{lemma}

\begin{proof} The result is immediate by Theorem \ref{T1}.
\end{proof}

Walk matrices allow us to present the incidence matrix as a measure of
$1/2$-walks.

\begin{lemma}\label{HalfWalkIncidenceMatrix}
\label{W3}If $G$ is an oriented hypergraph, then $X_{(G,V,E,1/2)}=\mathrm{H}%
_{G}$.
\end{lemma}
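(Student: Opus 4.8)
The plan is to unwind the definitions directly: I will show entry by entry that the $(i,j)$-entry $w^{\pm}(v_i,e_j;1/2)$ of $X_{(G,V,E,1/2)}$ equals the entry $\eta_{ij}$ of $\mathrm{H}_G$ prescribed by \eqref{etaelements}. The whole argument hinges on classifying the walks of length $1/2$ from $v_i$ to $e_j$ and computing their signs, so no induction or matrix algebra is needed.

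First I would use the fact that the length of a walk is half the number of incidences it contains, so a walk of length $1/2$ contains exactly one incidence. Such a walk is therefore a cross-walk of the form $v_i,i_1,e_j$, and the single incidence $i_1$ must contain both $v_i$ and $e_j$; hence $i_1=(v_i,e_j,k)$ for some $k\in\{1,\ldots,\iota(v_i,e_j)\}$. The constraint $i_{2h-1}\neq i_{2h}$ imposes no restriction when only one incidence is present. Consequently, the walks of length $1/2$ from $v_i$ to $e_j$ are in bijection with the incidences $(v_i,e_j,k)$, of which there are exactly $\iota(v_i,e_j)$.

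Next I would compute the sign of each such walk via \eqref{SignOfAWalk}. With $n=1$ incidence the exponent is $p=\lfloor 1/2\rfloor=0$, so the leading factor $(-1)^p$ equals $1$ and the walk sign collapses to $\sigma(v_i,e_j,k)$. Since $w^{\pm}=w^{+}-w^{-}$ counts each positive walk as $+1$ and each negative walk as $-1$, it is precisely the sum of the signs over all length-$1/2$ walks from $v_i$ to $e_j$. Summing over $k$ then gives
\[
w^{\pm}(v_i,e_j;1/2)=\sum_{k=1}^{\iota(v_i,e_j)}\sigma(v_i,e_j,k)=\eta_{ij},
\]
which matches \eqref{etaelements} exactly.

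The only delicate point is the sign bookkeeping in the third step: one must verify that the floor exponent $p=\lfloor n/2\rfloor$ in \eqref{SignOfAWalk} vanishes for a half-walk, so that no spurious $\pm1$ is introduced and the walk sign reduces cleanly to the incidence orientation. Everything else is a routine comparison of the two entry formulas. I would also note that this argument uses neither simplicity nor uniformity and retains the full multiplicity sum over $k$, so it holds for an arbitrary oriented hypergraph, consistent with the hypothesis of the statement.
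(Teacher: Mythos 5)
Your proof is correct and takes essentially the same approach as the paper, which disposes of the lemma in one line by writing $x_{ve}=w^{\pm}(v,e;1/2)=\sigma(v,e)=\eta_{ve}$. In fact your version is slightly more careful: by keeping the sum over the multiplicity index $k$ and checking that $(-1)^{\lfloor 1/2\rfloor}=1$ in \eqref{SignOfAWalk}, you verify the claim for non-simple oriented hypergraphs as stated, whereas the paper's one-liner uses the shorthand $\sigma(v,e)$ appropriate to the simple case.
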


\begin{proof}
The $(v,e)$-entry of $X_{(G,V,E,1/2)}$ is $x_{ve}=w^{\pm }(v,e;1/2)=\sigma(v,e)=\eta_{ve}$.\qedhere
\end{proof}

The incidence dual again provides new walk information on $G$.
\begin{lemma}
\label{W2}The following duality relationships hold for $k$-walk matrices.

\begin{enumerate}
\item $X_{(G,V,V,k)}=X_{(G^{\ast },E^{\ast },E^{\ast },k)}$,

\item $X_{(G,E,E,k)}=X_{(G^{\ast },V^{\ast },V^{\ast },k)}$,

\item $X_{(G,V,E,k)}^{T}=X_{(G,E,V,k)}=X_{(G^{\ast },V^{\ast },E^{\ast },k)}$%
.
\end{enumerate}
\end{lemma}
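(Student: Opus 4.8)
The plan is to derive all three identities from a single principle: any bijection on walks that preserves the multiset of incidences used (together with their orientations) and their number preserves both the length and, by the unified sign formula \eqref{SignOfAWalk}, the sign, and therefore preserves each signed count $w^{\pm}$. Indeed \eqref{SignOfAWalk} expresses $sgn(W)=(-1)^{\lfloor n/2\rfloor}\prod_{h=1}^{n}\sigma(i_h)$, which depends only on the number $n$ of incidences of $W$ and on the product of their signs. I would apply this principle to two natural operations on walks: incidence dualization and reversal.

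For (1) and (2), and for the second equality in (3), I would use dualization. A walk $W=a_0,i_1,a_1,\dots,a_n$ of $G$ is, read symbol for symbol, a walk of $G^{\ast}$: passing to $G^{\ast}$ only swaps the roles of vertices and edges and relabels each incidence $(v,e,k)$ as $(e,v,k)$, and by definition $\sigma^{\ast}(e,v,k)=\sigma(v,e,k)$. Thus the alternation of the $a_h$, the containment of each $i_h$ between $a_{h-1}$ and $a_h$, and the walk's defining non-backtracking condition are all preserved, the incidence multiset and its size are unchanged, and so length and sign are preserved. The only remaining task is bookkeeping of anchor types: an old vertex is a new edge and an old edge is a new vertex, so vertex-walks of $G$ correspond to edge-walks of $G^{\ast}$ (giving (1)), edge-walks of $G$ correspond to vertex-walks of $G^{\ast}$ (giving (2)), and an edge-to-vertex cross-walk of $G$ becomes a $V^{\ast}$-to-$E^{\ast}$ cross-walk of $G^{\ast}$ (giving $X_{(G,E,V,k)}=X_{(G^{\ast},V^{\ast},E^{\ast},k)}$, the second equality of (3)). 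That dualization is a genuine involution here is guaranteed by Proposition~\ref{P2}.

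For the first equality in (3) I would instead use reversal: sending $W=a_0,i_1,\dots,i_n,a_n$ to $a_n,i_n,\dots,i_1,a_0$ fixes the incidence multiset and its size, hence preserves length and sign, while interchanging the two anchors. Applied to cross-walks this yields $w^{\pm}(v,e;k)=w^{\pm}(e,v;k)$, which is exactly the statement $X_{(G,V,E,k)}^{T}=X_{(G,E,V,k)}$. The main obstacle, and the only step needing real care, is the half-integral (cross-walk) case of both operations: unlike vertex- and edge-walks, a cross-walk carries one ``extra'' unpaired incidence, so I must check that reversal and dualization send cross-walks to cross-walks while correctly matching the adjacency structure and the lone incidence. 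The resolution is that the non-backtracking condition constrains only the two incidences meeting at a shared edge (respectively vertex), a relation symmetric under both reversal and the vertex-edge role swap; once this is verified, every assertion in the lemma reduces to relabeling, and as a sanity check the resulting factorizations $X_{(G,V,E,k+1/2)}=A_G^{k}\mathrm{H}_G$ and $X_{(G,E,V,k+1/2)}=\mathrm{H}_G^{T}A_G^{k}$ (via Lemmas~\ref{W1} and \ref{W3}) are transposes of one another.
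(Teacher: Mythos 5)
The paper states this lemma without proof, so the only benchmark is correctness. Your dualization argument for (1), (2), and the second equality of (3) is sound: reading a walk of $G$ symbol for symbol in $G^{\ast}$ preserves the positions of the incidences, hence the positional condition $i_{2h-1}\neq i_{2h}$, the number $n$ of incidences, and every value $\sigma(i_h)$, so by \eqref{SignOfAWalk} lengths and signs are preserved while the anchor types swap. That half of your proposal is complete.

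The gap is in the reversal step, exactly at the point you flagged and then waved away. The non-backtracking condition is \emph{positional}: it constrains the pairs $(i_1,i_2),(i_3,i_4),\dots$, i.e.\ the two incidences meeting at each odd-indexed element $a_{2h-1}$. When $n$ is even, reversal permutes these pairs among themselves; but when $n$ is odd (a cross-walk), reversal shifts the pairing by one, so the constraint migrates from the edges of the walk to its vertices (or vice versa), and the reverse of a walk need not be a walk. Concretely, let $G$ have a single $2$-edge $e$ on vertices $u,v$ with all incidences signed $+1$. Then $W = v,(v,e),e,(u,e),u,(u,e),e$ is a legitimate $\tfrac{3}{2}$-walk from $v$ to $e$ (only the pair $(i_1,i_2)=((v,e),(u,e))$ is constrained; $i_2=i_3$ is allowed), but its reversal $e,(u,e),u,(u,e),e,(v,e),v$ violates $i_1\neq i_2$. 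Counting everything: $w^{\pm}(v,e;\tfrac{3}{2})=-1$ while $w^{\pm}(e,v;\tfrac{3}{2})=0$, so under the paper's directed reading of walk counts the first equality of (3) is not merely unproven by your argument---it is false for $k\geq\tfrac{3}{2}$, and holds only at $k=\tfrac{1}{2}$ (single incidences), which is the only case the paper actually uses, in the corollary $L_G=X_{(G,V,E,1/2)}X_{(G,E,V,1/2)}$. (The alternative is to read $w^{\pm}(a_i,a_j;k)$ symmetrically, as counting walks \emph{between} the two anchors irrespective of direction, in which case the first equality of (3) is true by definition and needs no reversal bijection at all.) Your own sanity check exposes the problem: $X_{(G,V,E,k+1/2)}=A_G^{k}\mathrm{H}_G$ is right, but the correct companion factorization is $X_{(G,E,V,k+1/2)}=A_{G^{\ast}}^{k}\mathrm{H}_G^{T}$, since the constrained pairs of an edge-to-vertex walk are co-adjacencies, and this is \emph{not} the transpose of the former: in the example above, $\mathrm{H}_G^{T}A_G=\begin{pmatrix}-1 & -1\end{pmatrix}$ while $A_{G^{\ast}}\mathrm{H}_G^{T}=\begin{pmatrix}0 & 0\end{pmatrix}$.
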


The following corollary translates the Laplacian matrix in terms of $1/2$-walks.  It says that if $G$ is an oriented hypergraph, then the $(i,j)$-entry of $L_G$ represents the number of half walks from $v_i$ to some edge $e$ multiplied by the number of half walks from edge $e$ to $v_j$.

\begin{corollary} If $G$ is an oriented hypergraph, then $L_G=X_{(G,V,E,1/2)}X_{(G,E,V,1/2)}$.
\end{corollary}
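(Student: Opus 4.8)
The plan is to recognize the right-hand side as a product of two copies of the incidence matrix and then quote Theorem~\ref{T4}; since every ingredient has already been proved, the argument should be a short chain of substitutions rather than a new computation.

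First I would rewrite the left factor using Lemma~\ref{W3}, which identifies the vertex-to-edge half-walk matrix with the incidence matrix: $X_{(G,V,E,1/2)}=\mathrm{H}_G$. Next I would handle the right factor by applying the duality relation of Lemma~\ref{W2}(3), namely $X_{(G,V,E,1/2)}^T=X_{(G,E,V,1/2)}$; combined with the previous identification this gives $X_{(G,E,V,1/2)}=\mathrm{H}_G^T$. Multiplying, the product collapses to $X_{(G,V,E,1/2)}X_{(G,E,V,1/2)}=\mathrm{H}_G\mathrm{H}_G^T$, which equals $L_G$ by Theorem~\ref{T4}. A quick dimension check confirms consistency: the factors are $n\times m$ and $m\times n$, so the product has the expected size $n\times n$.

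I do not expect a genuine obstacle here, but the one point worth verifying is the scope of Theorem~\ref{T4}, which is stated for \emph{simple} oriented hypergraphs; I would either confirm that the simplicity hypothesis is intended in the corollary or check that the identity $L_G=\mathrm{H}_G\mathrm{H}_G^T$ persists in the intended generality. As a safeguard I could instead argue entrywise, since the $(i,j)$-entry of the product is $\sum_{e\in E}w^{\pm}(v_i,e;1/2)\,w^{\pm}(e,v_j;1/2)=\sum_{e\in E}\sigma(v_i,e)\sigma(v_j,e)$, which is precisely the sum evaluated in the proof of Theorem~\ref{T4} and therefore reduces to $\deg(v_i)$ when $i=j$ and to $-a_{ij}$ when $i\neq j$.
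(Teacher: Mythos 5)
Your proof is correct and is essentially identical to the paper's own argument: the paper likewise combines Lemma~\ref{HalfWalkIncidenceMatrix} with the transpose relation and Theorem~\ref{T4} to write $L_G=\mathrm{H}_{G}\mathrm{H}_{G}^T=X_{(G,V,E,1/2)}X_{(G,E,V,1/2)}$. Your caveat about the simplicity hypothesis is well spotted but applies equally to the paper's proof, which silently inherits the ``simple'' assumption from Theorem~\ref{T4}.
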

\begin{proof} By Theorem \ref{T4} and Lemma \ref{HalfWalkIncidenceMatrix}, 
\[ L_G =\mathrm{H}_{G}(\mathrm{H}_{G})^T=X_{(G,V,E,1/2)}(X_{(G,V,E,1/2)})^T=X_{(G,V,E,1/2)}X_{(G,E,V,1/2)}.\qedhere\]
\end{proof}

A \emph{weak walk} is a walk in which the condition $i_{2h-1}\neq i_{2h}$ is
removed and you are allowed to immediately return along the same incidence
at any point.  The sign of a weak walk is given by Equation \eqref{SignOfAWalk}.

Paralleling the notation $w(a_{i},a_{j};k)$ for the number of walks of
length $k$ from $a_{i}$ to $a_{j}$, we let $\widetilde{w}(a_{i},a_{j};k)$
denote the number of weak walks of length $k$ between $a_{i}$ and $a_{j}$,
and define $\widetilde{w}^{+}(a_{i},a_{j};k)$, $\widetilde{w}%
^{-}(a_{i},a_{j};k)$, $\widetilde{w}^{\pm }(a_{i},a_{j};k)$ for weak walks
analogously. A weak walk of length $1$ of the form $v,i,e,i,v$ is called a 
\emph{backstep}.

\begin{theorem}
\label{W4}If $G$ is an oriented hypergraph, then the $(i,j)$-entry of $D_{G}$
is $\widetilde{w}(v_{i},v_{j};1)-w(v_{i},v_{j};1)$, the number of backsteps
at $v_{i}$.
\end{theorem}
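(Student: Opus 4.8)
The plan is to show that the difference $\widetilde{w}(v_{i},v_{j};1)-w(v_{i},v_{j};1)$ counts exactly those length-$1$ weak walks that fail to be genuine walks, and then to verify that these are precisely the backsteps, which occur only on the diagonal. First I would write an arbitrary length-$1$ weak walk between $v_{i}$ and $v_{j}$ in the form $W=v_{i},i_{1},e,i_{2},v_{j}$, where $e\in E$ and $i_{1},i_{2}$ are incidences with $i_{1}$ containing $v_{i}$ and $e$ and $i_{2}$ containing $e$ and $v_{j}$. The only condition distinguishing a walk from a weak walk in this case is $i_{1}\neq i_{2}$; hence the length-$1$ weak walks between $v_{i}$ and $v_{j}$ partition into the genuine walks (those with $i_{1}\neq i_{2}$) and the remaining weak walks (those with $i_{1}=i_{2}$). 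This disjoint decomposition yields $\widetilde{w}(v_{i},v_{j};1)=w(v_{i},v_{j};1)+(\text{number of length-}1\text{ weak walks with }i_{1}=i_{2})$, so the claimed difference counts exactly the length-$1$ weak walks with $i_{1}=i_{2}$.

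Next I would observe that a length-$1$ weak walk with $i_{1}=i_{2}$ is precisely a backstep. If $i_{1}=i_{2}=:(v,e,k)$, then since $i_{1}$ must contain the anchor $v_{i}$ and $i_{2}$ must contain the anchor $v_{j}$, we are forced to have $v_{i}=v=v_{j}$; thus such weak walks have the form $v_{i},(v_{i},e,k),e,(v_{i},e,k),v_{i}$, which is exactly the backstep along the incidence $(v_{i},e,k)$. In particular, backsteps exist only when $i=j$, so for $i\neq j$ the difference is $0$, in agreement with the vanishing off-diagonal entries of the diagonal matrix $D_{G}$.

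Finally, for the diagonal case $i=j$, I would count the backsteps at $v_{i}$ directly: each incidence $(v_{i},e,k)\in\mathcal{I}$ produces exactly one backstep $v_{i},(v_{i},e,k),e,(v_{i},e,k),v_{i}$, and distinct incidences produce distinct backsteps. Since the number of incidences containing $v_{i}$ is by definition $\deg(v_{i})$, which is precisely the $(i,i)$-entry of $D_{G}$, the two cases combine to give $(D_{G})_{ij}=\widetilde{w}(v_{i},v_{j};1)-w(v_{i},v_{j};1)$, and this common value is the number of backsteps at $v_{i}$.

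The argument is essentially a careful unwinding of the definitions, so there is no serious analytic obstacle; the one point requiring care is the bookkeeping in the first step, namely confirming that dropping the constraint $i_{2h-1}\neq i_{2h}$ from a length-$1$ walk introduces precisely the backsteps — forcing the two anchors to coincide — and nothing more, which is exactly what confines the extra contribution to the diagonal.
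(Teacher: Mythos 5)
Your proof is correct and follows essentially the same route as the paper: partition the length-$1$ weak walks between $v_i$ and $v_j$ into genuine walks ($i_1\neq i_2$) and backsteps ($i_1=i_2$), note that the latter force the anchors to coincide, and identify the backsteps at $v_i$ bijectively with the incidences containing $v_i$, i.e.\ $\deg(v_i)$. Your version is simply a more explicit unwinding of the paper's terse argument, in particular spelling out why the off-diagonal entries vanish.
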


\begin{proof}
The weak walks of length $1$ from a fixed vertex $v_{i}$ must contain two
incidences. \ These naturally partition into those walks that return along
the same incidence, or backsteps, and those that form adjacencies. \ The
value $w(v_{i},v_{j};1)$ is precisely those that form adjacencies, so $%
\widetilde{w}(v_{i},v_{j};1)-w(v_{i},v_{j};1)$ counts the incidences at vertex $v_i$.
\end{proof}

The next theorem provides an alternative way to calculate the entries of the Laplacian matrix of an oriented hypergraph.

\begin{theorem}
\label{W6}If $G$ is an oriented hypergraph, then the $(i,j)$-entry of $L_{G}$
is 
\begin{equation*}
\ell _{ij}=\widetilde{w}(v_{i},v_{j};1)-2w^{+}(v_{i},v_{j};1).
\end{equation*}
\end{theorem}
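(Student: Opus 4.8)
The plan is to expand the definition $L_{G}=D_{G}-A_{G}$ one entry at a time and translate each of the two pieces into walk counts that are already available from earlier results, then collapse the result with the elementary identity that every walk is either positive or negative. Concretely, I would write $\ell_{ij}=(D_{G})_{ij}-(A_{G})_{ij}$ and handle the degree term and the adjacency term separately before recombining.

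First I would invoke Theorem \ref{W4} for the degree term, which already gives $(D_{G})_{ij}=\widetilde{w}(v_{i},v_{j};1)-w(v_{i},v_{j};1)$. For the adjacency term, the definition of $A_{G}$ together with the $k=1$ case of Theorem \ref{T1} gives $(A_{G})_{ij}=a_{ij}=w^{\pm}(v_{i},v_{j};1)=w^{+}(v_{i},v_{j};1)-w^{-}(v_{i},v_{j};1)$; here one uses that by the walk-sign convention \eqref{SignOfAWalk} a length-$1$ vertex-walk through an edge $e$ carries sign $-\sigma(v_{i},e)\sigma(v_{j},e)=sgn_{e}(v_{i},v_{j})$, so the positive and negative such walks are exactly the edges contributing $+1$ and $-1$ to $a_{ij}$. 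I would then substitute both expressions and use $w(v_{i},v_{j};1)=w^{+}(v_{i},v_{j};1)+w^{-}(v_{i},v_{j};1)$, since each ordinary walk has a well-defined sign $\pm 1$. The algebra then reads
\[
\ell_{ij}=\widetilde{w}(v_{i},v_{j};1)-\bigl(w^{+}(v_{i},v_{j};1)+w^{-}(v_{i},v_{j};1)\bigr)-\bigl(w^{+}(v_{i},v_{j};1)-w^{-}(v_{i},v_{j};1)\bigr),
\]
and the two occurrences of $w^{-}$ cancel while the two occurrences of $w^{+}$ reinforce, yielding $\widetilde{w}(v_{i},v_{j};1)-2w^{+}(v_{i},v_{j};1)$, as claimed.

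There is no genuine obstacle here; the only thing requiring care is the sign bookkeeping that makes the $w^{-}$ terms cancel, which hinges on matching the adjacency-matrix sign to the length-$1$ walk sign as noted above. As a consistency check one can verify the two degenerate regimes: off the diagonal no backstep can occur, so $\widetilde{w}=w=w^{+}+w^{-}$ and the formula collapses to $-a_{ij}$, while on the diagonal there is no self-adjacency (so $w^{+}(v_{i},v_{i};1)=0$) and the formula reduces to $\widetilde{w}(v_{i},v_{i};1)=\deg(v_{i})$. Both recover $L_{G}=D_{G}-A_{G}$ and serve only to confirm the identity, not as a step in the proof.
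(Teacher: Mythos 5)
Your proposal is correct and follows essentially the same route as the paper's proof: both expand $\ell_{ij}=d_{ij}-a_{ij}$, apply Theorem \ref{W4} to the degree term and the $k=1$ walk-counting result (Lemma \ref{W1}, i.e.\ Theorem \ref{T1}) to the adjacency term, and then collapse the expression using $w=w^{+}+w^{-}$. The extra sign bookkeeping and consistency checks you include are fine but not needed beyond what the paper does.
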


\begin{proof}
The $(i,j)$-entry of $L(G)$ is
\begin{align*}
\ell _{ij}=d_{ij}-a_{ij} &=[\widetilde{w}(v_{i},v_{j};1)-w(v_{i},v_{j};1)]-w^{\pm
}(v_{i},v_{j};1)\quad \text{(by Lemma }\ref{W1}\text{ and Theorem }\ref{W4}\text{.)} \\
& =\widetilde{w}(v_{i},v_{j};1)-2w^{+}(v_{i},v_{j};1).\qedhere
\end{align*}
\end{proof}

\begin{lemma}
\label{W7}The sign of every backstep is negative.
\end{lemma}

The \emph{weak }$\emph{k}$\emph{-walk matrix} $%
W_{(G,A_{1},A_{2},k)}=[w_{ij}] $ is defined similar to the $k$-walk matrix
except $w_{ij}=\widetilde{w}^{\pm }(a_{i},a_{j};k)$.

Theorem \ref{W6} allowes us to calculate the entries of the Laplacian matrix of an oriented hypergraph via walk information.  Here we present a unified combinatorial interpretation of the Laplacian matrix entries as weak walks of length 1.

\begin{theorem}
\label{W8}If $G$ is a simple oriented hypergraph, then $L_G=-W_{(G,V,V,1)}$.
\end{theorem}

\begin{proof}
If $i=j$, then every weak $1$-walk is a backstep since $G$ is simple. Thus, 
\begin{align*}
w_{ii} =\widetilde{w}^{\pm }(v_{i},v_{i};1) =\widetilde{w}^{+}(v_{i},v_{i};1)-\widetilde{w}^{-}(v_{i},v_{i};1) &=0-\widetilde{w}^{-}(v_{i},v_{i};1)\quad \text{(by Lemma }\ref{W7}) \\
&=-\deg (v_{i}).
\end{align*}
If $i\neq j$, then there are no backsteps and each weak walk is a walk. Thus,
\[ w_{ij} = \widetilde{w}^{\pm }(v_{i},v_{j};1) = w^{\pm }(v_{i},v_{j};1) = a_{ij}. \] 
Hence, $W_{(G,V,V,1)}=-D_G+A_G=-L_G$.
\end{proof}

Theorems \ref{W6} and \ref{W8} are summarized here.

\begin{corollary}If $G$ is a simple oriented hypergraph, then the $(i,j)$-entry of $L_{G}$ is 
\begin{equation*}
\ell _{ij}=\widetilde{w}(v_{i},v_{j};1)-2w^{+}(v_{i},v_{j};1)=-\widetilde{w}%
^{\pm }(v_{i},v_{j};1).
\end{equation*}
\end{corollary}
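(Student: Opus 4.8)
The plan is to read off the two claimed equalities directly from the two preceding theorems, since this corollary is precisely their conjunction. First I would observe that the left-hand equality, $\ell_{ij} = \widetilde{w}(v_i,v_j;1) - 2w^{+}(v_i,v_j;1)$, is literally the statement of Theorem \ref{W6}, so nothing new is required to obtain it.

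For the right-hand equality I would invoke Theorem \ref{W8}, which gives $L_G = -W_{(G,V,V,1)}$. By the definition of the weak $k$-walk matrix, the $(i,j)$-entry of $W_{(G,V,V,1)}$ is $\widetilde{w}^{\pm}(v_i,v_j;1)$, so comparing entries of $L_G = -W_{(G,V,V,1)}$ yields $\ell_{ij} = -\widetilde{w}^{\pm}(v_i,v_j;1)$. Chaining this with the display from Theorem \ref{W6} produces the full chain of equalities, completing the proof.

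If one instead wanted a self-contained verification that the middle expression equals the right-hand one, the only nontrivial input is the identity $\widetilde{w}^{+}(v_i,v_j;1) = w^{+}(v_i,v_j;1)$. This follows from Lemma \ref{W7}: every backstep is negative, so no positive weak $1$-walk can be a backstep, and hence every positive weak $1$-walk is an honest walk. Expanding $\widetilde{w} = \widetilde{w}^{+} + \widetilde{w}^{-}$ and $\widetilde{w}^{\pm} = \widetilde{w}^{+} - \widetilde{w}^{-}$ and substituting $w^{+} = \widetilde{w}^{+}$ then reduces both sides to $\widetilde{w}^{-} - \widetilde{w}^{+}$.

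I do not anticipate a genuine obstacle here: the substantive content was already established in Theorems \ref{W6} and \ref{W8}, and the corollary merely repackages them. The one place to exercise care is the sign bookkeeping in the optional direct check, where one must remember to use Lemma \ref{W7} to identify positive weak $1$-walks with positive walks before the two expressions collapse to the same quantity.
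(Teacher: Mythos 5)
Your proposal is correct and matches the paper exactly: the paper states this corollary as a mere summary of Theorems \ref{W6} and \ref{W8} with no further proof, which is precisely your main argument of chaining the two theorems via the entry-wise definition of $W_{(G,V,V,1)}$. Your optional direct check (using Lemma \ref{W7} to identify $\widetilde{w}^{+}(v_i,v_j;1)$ with $w^{+}(v_i,v_j;1)$ so both expressions collapse to $\widetilde{w}^{-}-\widetilde{w}^{+}$) is also sound, though not needed.
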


\bibliographystyle{amsplain2}
\bibliography{mybib}

\end{document}